\newtheorem{theorem}{Theorem}
\newtheorem{lemma}[theorem]{Lemma}
\newtheorem{corollary}[theorem]{Corollary}
\DeclareMathOperator{\sympeak}{sp}
\DeclareMathOperator{\nonsympeak}{nsp}
\DeclareMathOperator{\SP}{SP}
\DeclareMathOperator{\NSP}{NSP}
\begin{document}
\title{Counting symmetric and non-symmetric peaks in a set partition}

\author{Walaa Asakly and Noor Kezil\\
Department of Mathematics, Braude college, Karmiel, Israel\\
Department of Mathematics, University of Haifa , Haifa, Israel\\
{\tt walaaa@braude.ac.il}\\
{\tt nkizil02@campus.haifa.ac.il}}
\date{\small }
\maketitle

\begin{abstract}
The aim of this paper is to derive explicit formulas
for two statistics. The first one is the total number of symmetric peaks in a set partition of $[n]$
with exactly $k$ blocks, and the second one is the total number of non-symmetric peaks in a set partition of $[n]$
with exactly $k$ blocks. We represent these results in two ways: first, by using the theory of generating functions,
and second, by using combinatorial tools.
\medskip

\noindent{\bf Keywords}: Symmetric peaks, Non-symmetric peaks, Set partitions,  Stirling numbers
of the second kind and Generating functions.

\end{abstract}

\section{Introduction}
A {\em partition} $\Pi$ of set $[n]=\{1,2,\ldots,n\}$ of size
$k$ ({\em a partition of $[n]$ with exactly $k$ blocks}) is a collection $\{B_1,B_2,\ldots,B_k\}$
that satisfies: $\emptyset\neq B_i \subseteq [n]$ for all $i$, $B_i\bigcap B_j=\emptyset$ for $i\neq j$
and $\bigcup_{i=1}^{k}B_i=[n]$.
The elements $B_i$ are called {\em blocks}. We use the assumption that blocks
are listed in an increasing order of their minimal elements, that is, $\min B_1 <
\min B_2 < \cdots< \min B_k$.
We let $P_{n,k}$ denote the set of all partitions of $[n]$ with exactly $k$ blocks see \cite{S}. Note that
  $|P_{n,k}|=S_{n,k}$, which are known as the {\em Stirling numbers of the second kind} (see A008277 in \cite{Sl}).
A partition $\Pi$ can be written as
$\pi_1\pi_2\cdots\pi_n$, where $i\in B_{\pi_i}$ for all $i$. This form is called the {\em canonical sequential form}. For
example, $\Pi=\{\{12\},\{3\}\}$ is a partition of $[3]$, the
canonical sequential form is $\pi=112$. Then we can see that any partition
$\Pi$ of set $[n]$ with exactly $k$ blocks can be represented as a word
of length $n$ over alphabet $[k]$  ({\em a word $\omega$ of length $n$ over alphabet $[k]$}
 is an element of $[k]^n$, see \cite{HM}) of the form  $1(1)^*2(2)^*\cdots k(k)^*$, where $(m)^*$ is a word
over alphabet $[m]$. These words are called {\em restricted growth functions}. For other properties of
set partitions see \cite{TM}.

The study of patterns in combinatorial structures is very popular. For example, Kitaev \cite{K} researched patterns
in permutations and words. For more examples of patterns in various combinatorial structures, refer to \cite{K1,K2,MM,Sh}.
Given $\pi=\pi_1\pi_2\cdots\pi_n \in [k]^n$, a pattern $\pi_i\pi_{i+1}\pi_{i+2}$ is called a {\em peak} at $i$ if it satisfies $\pi_i<\pi_{i+1}>\pi_{i+2}$ for $1\leq i\leq n-2$. For instance, let $\pi=1322141251$ be a word in $[5]^{10}$. It has $3$ peaks, which are $132$, $141$ and $251$. Mansour and Shattuck
\cite{MS} determined the number of peaks in all words of length $n$ over alphabet $[k]$, and derived the number of peaks over
$P_{n,k}$.
 We say that $\pi=\pi_1\pi_2\cdots\pi_n \in [k]^n$ contains  {\em a symmetric peak}, if
exists $1\leq i \leq n-2$ such that
$\pi_i\pi_{i+1}\pi_{i+2}$ is a peak, and
$\pi_i=\pi_{i+2}$, and we say that $\pi$ contains {\em a non-symmetric peak}, if exists $1\leq i \leq n-2$ such that
$\pi_i\pi_{i+1}\pi_{i+2}$ is a peak and
$\pi_i\neq \pi_{i+2}$. In the case of the example $\pi=1322141251$, there is one symmetric peak $141$, and two non-symmetric peaks, namely $132$ and $251$.
 Asakly \cite{A} determined the number of symmetric and non-symmetric peaks in all words of length $n$ over alphabet $[k]$. More recently,
 other researchers have studied symmetric and non-symmetric peaks in different combinatorial structures. For additional information refer to \cite{FR, SSZ}.
In this paper, we aim to determine the total number of symmetric and
non-symmetric peaks over $P_{n,k}$. Mansour, Shattuck and Yan \cite{MSS} found the total number of occurrences of the subword pattern $121$ (a symmetric peak) in all the partitions of $[n]$ with exactly $k$ blocks. Additionally, they found the number of occurrences of the subword patterns $231$ and $321$ (non-symmetric peaks) in all the partitions of $[n]$ with exactly $k$ blocks. We present two alternative proofs that yield the same results: the first one by using the theory of generating functions and the results that Asakly \cite{A} obtained, and the second one by using combinatorial tools.

\vspace{0.5cm}

\section{Counting symmetric peaks}
\subsection{The ordinary generating function for the number of partitions with exactly $k$ blocks according to the number of
symmetric peaks}
Let $\sympeak(\pi)$ denote the number of symmetric peaks in partition $\pi$.
Let $\SP_k(x,q)$ be the ordinary generating function for the number of partitions of $[n]$ with exactly $k$ blocks
according to the number of symmetric peaks, that is
$$\SP_k(x,q)=\sum_{n\geq k}x^n\sum_{\pi \in P_{n,k}}q^{\sympeak(\pi)}.$$

\begin{theorem}\label{Th1}
 The generating function for the number of partitions of $[n]$ with exactly $k$ blocks
according to the number of symmetric peaks is given by
$$\SP_k(x,q)=x^k(xq+1-x)^{k-1}\prod_{j=1}^{k}W_j(x,q),$$
where
 $$W_j(x,q)=\frac{x(q-1)+(1-x(q-1))W_{j-1}(x,q)}{1-x(1-q)(1-(j-1)x)-xW_{j-1}(x,q)(x(j-1)+q(1-x(j-1)))},$$
 with initial condition $W_0(x,q)=1$.

\end{theorem}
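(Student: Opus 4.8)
The plan is to pass to the canonical sequential (restricted-growth) form and build the generating function one block at a time. Write a partition of $[n]$ with $k$ blocks as $\pi = 1 w_1 2 w_2 \cdots k w_k$, where $w_j$ is an arbitrary (possibly empty) word over $[j]$, sitting between the first occurrences of $j$ and of $j+1$ (and $w_k$ is everything after the first $k$). The key structural remark is that a first-occurrence letter --- indeed any left-to-right maximum --- is too large to be a shoulder of a peak, so it can only be an apex. Together with the restricted-growth property this yields a clean dichotomy: every symmetric peak of $\pi$ either lies entirely inside a single block $w_j$, or has its apex at the first occurrence of some $m$ with $2 \le m \le k$; in the second case $\pi$ has a peak there exactly when $w_m$ is nonempty and begins with a letter $< m$, and that peak is symmetric exactly when this first letter of $w_m$ equals the letter immediately preceding the first $m$ (namely the last letter of $w_{m-1}$, or $m-1$ if $w_{m-1}$ is empty).

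First I would set up the within-block generating functions. For a fixed $j$, the word $w_j \in [j]^*$ is counted by length and by its number of symmetric peaks exactly as in Asakly's formula for words; I would use the refinement of that series which additionally records the first and the last letter of $w_j$, since precisely these letters interact with the surrounding first-occurrence letters. One obtains a recursion for these refined series by decomposing $w_j$ at the occurrences of its own largest letter $j$, writing $w_j$ (when it contains a $j$) as $v_0\, j\, v_1\, j \cdots j\, v_t$ with each $v_i \in [j-1]^*$: by the dichotomy above, the symmetric peaks of $w_j$ are the symmetric peaks of the $v_i$'s, plus one for each internal $j$ whose two neighbouring letters are present and equal. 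Summing the resulting geometric-type series over $t$ and over the $v_i$'s expresses the aggregate contribution $W_j$ of the $j$-th block through $W_{j-1}$ --- the empty-alphabet case giving $W_0 = 1$, and the coefficient $j-1$ in the recursion recording the number of admissible shoulder values strictly below $j$. Assembling the pieces along the word $1 w_1 2 w_2 \cdots k w_k$: the $k$ first-occurrence letters contribute $x^k$; each of the $k-1$ boundaries (the first occurrences of $2,\dots,k$) contributes the factor $xq+1-x = 1 - x(1-q)$, which records the contribution of whether a symmetric peak is formed at that first occurrence once the auxiliary variable tracking the preceding letter has been resolved; and the $k$ blocks contribute $\prod_{j=1}^k W_j(x,q)$. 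A quick check at $k = 1$ (the recursion gives $W_1 = 1/(1-x)$ and hence $\SP_1 = x/(1-x)$, as it must since $1 1 \cdots 1$ has no peaks) and at $k = 2$ fixes all constants.

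The step I expect to be the real obstacle is the boundary bookkeeping: organizing the transfer from one block to the next so that a symmetric peak straddling a junction is counted once and only once, and showing that the dependence of the refined block series on the first and last letters actually collapses --- after summing over the ``incoming last letter'' state --- into the clean closed forms above (note that the raw distribution of symmetric peaks among words over $[m]$ with a prescribed initial letter is genuinely \emph{not} independent of that letter, so some cancellation must be exhibited). Granting this, the derivation of the recursion for $W_j$ is a fairly mechanical geometric-series manipulation, though it is exactly there that the precise shape of the denominator, $1 - x(1-q)(1-(j-1)x) - x W_{j-1}\bigl(x(j-1) + q(1-x(j-1))\bigr)$, has to be read off carefully.
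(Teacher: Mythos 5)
Your decomposition $1w_1 2w_2\cdots kw_k$ is the unrolled form of the paper's recursive decomposition $\pi=\pi'k\omega$, and your assembly of the factors $x^k$, $(xq+1-x)^{k-1}$ and $\prod_{j}W_j$ is exactly the paper's argument, so in outline you are on the same route. But the step you single out as ``the real obstacle'' --- showing that the refined block series, which must remember their boundary letters, collapse after summation into a clean factor $(xq+1-x)W_j$ per junction and block --- is not merely hard: it fails, and consequently the identity you are trying to prove does not hold as stated. As you suspected, the distribution of symmetric peaks over words in $[k]^n$ with a prescribed first letter genuinely depends on that letter (for $k=2$, $n=3$, words beginning with $1$ contribute $3+q$ and words beginning with $2$ contribute $4$, whereas $xW_2(x,q)$ assigns $4$ to both), and the discrepancy cancels only to first order in $q-1$, not identically. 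Concretely, for $n=5$ and $k=2$ the formula of Theorem \ref{Th1} yields $[x^5]\SP_2(x,q)=7+8q$, while direct enumeration of the $15$ partitions of $[5]$ into two blocks gives $8+6q+q^2$ (the partition $12121$ has two symmetric peaks, so a $q^2$ term must appear). The two polynomials agree in value and in first derivative at $q=1$, which is why Corollary \ref{cor1} is nevertheless correct, but they are not equal.

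So, carried out honestly, your plan would refute the bivariate identity rather than prove it; the cancellation you say ``must be exhibited'' cannot be. For what it is worth, the paper's own proof commits precisely the unjustified step you flagged: it assigns the generating function $x^2qW_k(x,q)$ to the case where $\omega$ begins with the prescribed last letter of $\pi'$ and $W_k(x,q)-xW_k(x,q)-1$ to the remaining nonempty $\omega$, which is exactly the independence-of-the-first-letter assumption you doubted. If your target is the enumerative consequence (Corollary \ref{cor1}), the repair is to run your refined transfer argument only through first order in $q-1$: track separately the expected number of junction peaks (giving the $(k-1)x\SP_k(x,1)$-type term) and of peaks internal to a block (giving the $\sum_j$ term); at the level of first moments the dependence on boundary letters does wash out. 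A correct full $q$-analogue would require carrying the first- and last-letter states through the whole product without collapsing them, and would not have the simple product form claimed in the theorem.
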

\begin{proof}
In this context we want to use the canonical sequential form of a partition $\pi$ of $[n]$ with exactly $k$ blocks, where $k\geq 2$.
Any partition $\pi$ of $[n]$ with exactly $k$ blocks, can be decomposed as,
 $\pi=\pi'k\omega$, where $\pi'$ is a partition of $[n_1]$ with exactly $k-1$ blocks and $\omega$ is a word of length $n_2$
 over alphabet $[k]$, satisfying $n_1+1+n_2=n$. There are three possibilities: $\omega$ is empty, $\omega$ starts with a letter equal to the last
 letter in $\pi'$, and $\omega$ starts with a letter different from the last letter of $\pi'$.
  Let $W_k(x,q)$ be the generating function for the number of words $\omega$ of length
  $n$ over alphabet $[k]$, according to the statistic of symmetric peaks. By Asakly's result \cite{A}, we have  $$W_k(x,q)=\frac{x(q-1)+(1-x(q-1))W_{k-1}(x,q)}{1-x(1-q)(1-(k-1)x)-xW_{k-1}(x,q)(x(k-1)+q(1-x(k-1)))}.$$
  The corresponding generating functions for the aforementioned cases are:
   $$x\SP_{k-1}(x,q),$$
   $$x^2q\SP_{k-1}(x,q)W_k(x,q),$$
   $$x\SP_{k-1}(x,q)(W_k(x,q)-xW_k(x,q)-1).$$
   This leads to
  $$\SP_k(x,q)=x\SP_{k-1}(x,q)W_k(x,q)(xq+1-x),$$
  with initial conditions,
  $\SP_0(x,q)=1$ and $\SP_1(x,q)=\frac{x}{1-x}$.
  By induction, we obtain the required result.
\end{proof}

Note that, by substituting $q=1$ in Theorem \ref{Th1}, we obtain $\SP_k(x,1)=x^k\prod_{j=1}^{k}W_j(x,1)=x^k\prod_{j=1}^{k}\frac{1}{1-jx}$ which is
the generating function for the number of partitions of $[n]$ with $k$ blocks.
We aim to find the total number of symmetric peaks
over all partitions of $[n]$ with exactly $k$ blocks. We proceed as follows:

\begin{itemize}
 \item First, differentiate the generating function $\SP_k(x,q)$ with respect
 to the variable $q$.
\item Second, substitute $q=1$ in $\frac{\partial }{\partial q}\SP_k(x,q)$
\item Finally, find the coefficients of $x^n$ in $\frac{\partial}{\partial q}\SP_k(x,q)\mid_{q=1}$
to obtain the total number of symmetric peaks over all $P_{n,k}$.
\end{itemize}
\begin{lemma}\label{lem1}
The partial derivative $\frac{\partial }{\partial q}\SP_k(x,q)\mid_{q=1}$ is given by
 \begin{equation}\label{eq1}
 \frac{\partial }{\partial q}\SP_k(x,q)\mid_{q=1}=(k-1)x\SP_k(x,1)+\SP_k(x,1)\sum_{m= 1}^{k}\frac{x^3\binom{m}{2}}{(1-mx)}.
 \end{equation}
\end{lemma}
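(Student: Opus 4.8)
Write $\partial_q$ for $\partial/\partial q$. The plan is to apply logarithmic differentiation to the product formula of Theorem~\ref{Th1}. From $\SP_k(x,q)=x^k(xq+1-x)^{k-1}\prod_{j=1}^{k}W_j(x,q)$ one gets
$$\frac{\partial_q\SP_k(x,q)}{\SP_k(x,q)}=(k-1)\frac{x}{xq+1-x}+\sum_{j=1}^{k}\frac{\partial_qW_j(x,q)}{W_j(x,q)}.$$
Now specialize at $q=1$. Using $xq+1-x=1$ there, together with the evaluations $\SP_k(x,1)=x^k\prod_{j=1}^k\frac1{1-jx}$ and $W_j(x,1)=\frac1{1-jx}$ recorded just after Theorem~\ref{Th1}, this reads
$$\partial_q\SP_k(x,q)\mid_{q=1}=(k-1)x\,\SP_k(x,1)+\SP_k(x,1)\sum_{j=1}^{k}(1-jx)\,\partial_qW_j(x,q)\mid_{q=1}.$$
Comparing with \eqref{eq1}, the lemma reduces to the single identity $\partial_qW_j(x,q)\mid_{q=1}=\dfrac{x^3\binom j2}{(1-jx)^2}$ for every $j$: granting it, $(1-jx)\partial_qW_j\mid_{q=1}=\dfrac{x^3\binom j2}{1-jx}$, and the sum over $j$ becomes the sum over $m$ in \eqref{eq1}.

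To obtain that identity I would argue by induction on $j$ directly from the recursion for $W_j(x,q)$ in Theorem~\ref{Th1}, setting $V_j:=\partial_qW_j(x,q)\mid_{q=1}$. The base cases are immediate: $W_0(x,q)=1$ and (a one-line check) $W_1(x,q)=\frac1{1-x}$ are both independent of $q$, so $V_0=V_1=0$, which agrees with $\frac{x^3\binom j2}{(1-jx)^2}$ at $j=0,1$ since $\binom02=\binom12=0$. For the step, write $W_j=N_j/D_j$ with $N_j,D_j$ the numerator and denominator in the recursion, evaluate $N_j$, $D_j$, $\partial_qN_j$, $\partial_qD_j$ at $q=1$ in terms of $w:=W_{j-1}(x,1)=\frac1{1-(j-1)x}$ and of $V_{j-1}$, and then compute $V_j=\dfrac{(\partial_qN_j)D_j-N_j(\partial_qD_j)}{D_j^{2}}$ at $q=1$. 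Substituting the inductive hypothesis for $V_{j-1}$ and simplifying should collapse the expression to $V_j=\frac{x^3\binom j2}{(1-jx)^2}$; the case $j=2$, where $V_2=\frac{x^3}{(1-2x)^2}$, is a convenient sanity check. Alternatively one can simply quote Asakly's explicit count of symmetric peaks in words \cite{A}, whose generating-function form is exactly this closed expression for $V_j$, thereby bypassing the induction.

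Combining the two parts, and recalling that $\SP_k(x,1)$ is the generating function for $P_{n,k}$, yields \eqref{eq1}. The delicate points here are purely bookkeeping: one must carefully track the $q$-dependence that disappears at $q=1$ — in particular the fact that $\SP_1(x,q)=\frac{x}{1-x}$ is genuinely $q$-free, so the product formula is already valid at $k=1$ — and then push through the rational-function algebra in the inductive evaluation of $V_j$ without slips. That last simplification, though entirely routine, is where essentially all of the work sits.
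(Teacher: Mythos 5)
Your proposal is correct and follows essentially the same route as the paper: differentiate the product formula of Theorem~\ref{Th1} (your logarithmic differentiation is just the product rule in disguise), set $q=1$, and invoke $W_j(x,1)=\frac{1}{1-jx}$ together with Asakly's evaluation $\partial_q W_j(x,q)\mid_{q=1}=\frac{x^3\binom{j}{2}}{(1-jx)^2}$, which the paper likewise quotes from \cite{A} rather than re-deriving by induction.
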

\begin{proof}
We need to differentiate the generating function $\SP_k(x,q)$ with respect
 to the variable $q$ and then evaluate the result at $q=1$:
 \begin{align*}
  \frac{\partial }{\partial q}\SP_k(x,q)\mid_{q=1}
  &=x^{k+1}(k-1)(xq+1-x)^{k-2}\prod_{j=1}^{k}W_j(x,q)\mid_{q=1}\\
  &+x^k(xq+1-x)^{k-1}\left( \sum_{m=1}^{k}\frac{\partial}{\partial q}W_m(x,q)\prod_{j=1, j\neq m}^{k}W_j(x,q)\right) \mid_{q=1}\\
  &=x^{k+1}(k-1)\prod_{j=1}^{k}W_j(x,1)+x^k\left( \sum_{m=1}^{k}\frac{\frac{\partial}{\partial q}W_m(x,q)}{W_m(x,q)}\prod_{j=1}^{k}W_j(x,q)\right) \mid_{q=1},
 \end{align*}
 this leads to,
 \begin{equation}\label{eq2}
  \frac{\partial }{\partial q}\SP_k(x,q)\mid_{q=1}=x^{k+1}(k-1)\prod_{j=1}^{k}W_j(x,1)+x^k\left( \sum_{m=1}^{k}\frac{\frac{\partial}{\partial q}W_m(x,1)}{W_m(x,1)}\prod_{j=1}^{k}W_j(x,1)\right),
 \end{equation}
 using the facts that,
 \begin{align*}
 \SP_k(x,1)=x^k\prod_{j=1}^{k}W_j(x,1)=x^k\prod_{j=1}^{k}\frac{1}{1-jx},
 \end{align*}
 and
\begin{align*}
\frac{\partial}{\partial q}W_k(x,q)\mid_{q=1}=\frac{x^3\binom{k}{2}}{(1-kx)^2},
\end{align*}
(see Asakly \cite{A} for more details),
together in (\ref{eq2}), we get the required.
\end{proof}
\begin{corollary}\label{cor1}
The total number of symmetric peaks in all partitions of $[n]$ with exactly $k$ blocks is
\begin{equation*}
 (k-1)S_{n-1,k}+\sum_{j=2}^{k}\binom{j}{2}\sum_{i=3}^{n-k}j^{i-3}S_{n-i,k}.
\end{equation*}
\end{corollary}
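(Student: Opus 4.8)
The plan is to read off the coefficient of $x^n$ from the closed form for $\frac{\partial}{\partial q}\SP_k(x,q)\mid_{q=1}$ supplied by Lemma~\ref{lem1}. Indeed, by the very definition of $\SP_k(x,q)$ as an ordinary generating function, the total number of symmetric peaks over all of $P_{n,k}$ is exactly $[x^n]\frac{\partial}{\partial q}\SP_k(x,q)\mid_{q=1}$, so the Corollary is nothing more than a coefficient extraction from identity~\eqref{eq1}.

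First I would recall the two ingredients already at hand: on one side, $\SP_k(x,1)=x^k\prod_{j=1}^{k}\frac{1}{1-jx}=\sum_{m\ge k}S_{m,k}x^m$ is the ordinary generating function of the Stirling numbers of the second kind; on the other side, Lemma~\ref{lem1} gives
$$\frac{\partial}{\partial q}\SP_k(x,q)\Big|_{q=1}=(k-1)x\,\SP_k(x,1)+\SP_k(x,1)\sum_{m=1}^{k}\frac{x^{3}\binom{m}{2}}{1-mx}.$$
Since $\binom{1}{2}=0$, the inner sum may be restarted at $m=2$, and I would re-index it by $j$.

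Next I would handle the two summands separately. For the first term, $[x^n]\big((k-1)x\,\SP_k(x,1)\big)=(k-1)[x^{n-1}]\SP_k(x,1)=(k-1)S_{n-1,k}$. For the second, I would expand each rational factor as a geometric series, $\frac{x^{3}}{1-jx}=\sum_{i\ge 3}j^{\,i-3}x^{i}$, so that
$$[x^n]\Big(\SP_k(x,1)\cdot\frac{x^{3}}{1-jx}\Big)=\sum_{i\ge 3}j^{\,i-3}\,[x^{n-i}]\SP_k(x,1)=\sum_{i\ge 3}j^{\,i-3}S_{n-i,k}.$$
Because $S_{n-i,k}=0$ whenever $n-i<k$, the summation range collapses to $3\le i\le n-k$. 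Multiplying by $\binom{j}{2}$, summing over $2\le j\le k$, and adding the first contribution produces precisely $(k-1)S_{n-1,k}+\sum_{j=2}^{k}\binom{j}{2}\sum_{i=3}^{n-k}j^{\,i-3}S_{n-i,k}$.

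The whole argument is bookkeeping in coefficient extraction, and I do not anticipate a genuine obstacle: the analytic content is already packaged in Lemma~\ref{lem1}. The only points needing a moment of care are matching index ranges — observing that $\binom{1}{2}=0$ legitimately starts the $j$-sum at $2$, and that the vanishing of $S_{n-i,k}$ for small $n-i$ truncates the otherwise infinite geometric expansion to the stated finite range — both of which are straightforward to verify.
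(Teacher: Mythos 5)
Your proposal is correct and follows essentially the same route as the paper: both extract the coefficient of $x^n$ from identity~\eqref{eq1} of Lemma~\ref{lem1} using the expansion $\SP_k(x,1)=\sum_{r\ge k}S_{r,k}x^r$. You simply spell out the bookkeeping (the geometric expansion of $x^3/(1-jx)$, the vanishing of $\binom{1}{2}$, and the truncation from $S_{n-i,k}=0$) that the paper leaves implicit.
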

\begin{proof}
In order to enumerate the total number of all partitions of $[n]$ with exactly $k$ blocks according to
the symmetric peaks, we need to find the coefficients of $x^n$ in (\ref{eq1}).
Due to the fact that,
\begin{align*}
\SP_k(x,1)=\frac{x^k}{\prod_{j=1}^{k}(1-jx)}=\sum_{r\geq k}S_{r,k}x^r.
\end{align*}
we get the required result.
\end{proof}
\subsection{Combinatorial proof}
In this subsection, we want to present a combinatorial proof for Corollary \ref{cor1}. For that, we need the following
definitions: Consider any set partition $\pi = \pi_1 \pi_2 \cdots \pi_n$, represented by its canonical sequence.
We say that $\pi$ contains a {\em rise (descent)} at $i$ if $\pi_i<\pi_{i+1}(\pi_i>\pi_{i+1})$. For instance, for $\pi=1121324323 \in P_{10,4}$, we have $4$ rises at $i=2, 4, 6, 9$ and $4$ descents at $i=3, 5, 7, 8$. Furthermore, we say that $\pi_i$ is a record if $\pi_i>\pi_j$ for  all $j=1, 2, \ldots, i-1$, and $i$ is called the index of the record $\pi_i$, see \cite{ATS}.
\begin{proof}
Let us divide the proof into two parts. In the first part, our focus is on symmetric peaks $\pi_\ell \pi_{\ell+1}\pi_{\ell+2}$ where $\pi_{\ell+1}$ is a record. In the second part, our attention will turn to a symmetric peak, where $\pi_{\ell+1}$ is not a record.

Let $\pi = \pi_1 \pi_2 \cdots \pi_{n-1}$ be a canonical sequential form for $\Pi \in P_{n-1,k}$. Choose any record $\pi_{\ell+1}$ in $\Pi \in P_{n-1,k}$, where $1\leq \ell\leq n-1$, and let $a=\pi_\ell$. It is obvious that we have a rise at $\ell$. Add $\ell+2$ to block $a$ and increase by one any member in their block that is greater or equal to $\ell+2$. As a result, we obtain the canonical sequential form $\pi'=\pi_1\pi_2\cdots\pi_\ell\pi_{\ell+1}a\pi_{\ell+2}\cdots\pi_{n-1}$, where $\pi_{\ell+1}a\pi_{\ell+2}$ forms a symmetric peak. With $k-1$ options for choosing a record, we have a total of $(k-1)S_{n-1,k}$ symmetric peaks.

Considering $i$ and $j$, where $3 \le i \le n-k$ and $2 \le j \le k$, the total number of symmetric peaks at $\ell$ where $\pi_{\ell+1}$ is not a record can be determined by summing the optional partitions in $P_{n,k}$ that can be decomposed as $\pi=\pi''j\alpha\beta$ for each $i$ and $j$. Here, $\pi''$ is a partition with $j-1$ blocks, $\alpha$ is a word of length $i$ over alphabet $[j]$ whose last three letters form a symmetric peak, and $\beta$ may be an empty word. There are ${j \choose 2}j^{i-3}S_{n-i,k}$ members of $P_{n,k}$ with this form. This is because there are $j^{i-3}$ choices for the first $i-3$ letters of $\alpha$, ${j \choose 2}$ for the final three letters in $\alpha$ (representing a symmetric peak), and $S_{n-i,k}$ choices for the remaining letters $\pi=\pi''j\beta$, ensuring that they form a partition of an $(n-i)$-set into $k$ blocks.
\end{proof}

\section{Counting non-symmetric peaks}
\subsection{The ordinary generating function for the number of partitions with exactly $k$ blocks according to the number of
non-symmetric peaks}
Let $\nonsympeak(\pi)$ denote the number of non-symmetric peaks in partition $\pi$.
Let $\NSP_k(x,q)$ be the ordinary generating function for the number of partitions of $[n]$ with exactly $k$ blocks
according to the number of non-symmetric peaks, that is
$$\NSP_k(x,q)=\sum_{n\geq k}x^n\sum_{\pi \in P_{n,k}}q^{\nonsympeak(\pi)}.$$

\begin{theorem}\label{Th2}
 The generating function for the number of partitions of $[n]$ with exactly $k$ blocks
according to the number of non-symmetric peaks is given by
$$\NSP_k(x,q)=x^k\prod_{i=1}^{k}\widetilde{W}_i(x,q)\prod_{j=3}^{k}\left((j-2)xq+1
-(j-2)x\right)$$
where
 $$\widetilde{W}_j(x,q)=\frac{x(q-1)+(1-x(q-1))\widetilde{W}_{j-1}(x,q)}{1-x(1-q)(1-2x)-x\widetilde{W}_{j-1}(x,q)(2x+q(1-2x))},$$
 with initial condition $\widetilde{W}_0(x,q)=1$.
\end{theorem}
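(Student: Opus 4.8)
The plan is to mirror the decomposition used in the proof of Theorem~\ref{Th1}, but now tracking non-symmetric peaks, and to be careful about the extra complication that a non-symmetric peak $\pi_i<\pi_{i+1}>\pi_{i+2}$ with $\pi_i\neq\pi_{i+2}$ requires the alphabet to have at least three letters available below the peak value. First I would write any partition $\pi\in P_{n,k}$ with $k\geq 2$ in canonical sequential form and split off the last occurrence of the largest letter: $\pi=\pi' k\,\omega$, where $\pi'$ is a partition of $[n_1]$ with exactly $k-1$ blocks and $\omega\in[k]^{n_2}$ with $n_1+1+n_2=n$. As in Theorem~\ref{Th1}, $\omega$ is either empty, or begins with the letter equal to the last letter $a$ of $\pi'$, or begins with a letter different from $a$.

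Next I would invoke Asakly's result for words: letting $\widetilde W_k(x,q)$ be the generating function for words of length $n$ over $[k]$ tracked by non-symmetric peaks, we have
$$\widetilde W_k(x,q)=\frac{x(q-1)+(1-x(q-1))\widetilde W_{k-1}(x,q)}{1-x(1-q)(1-2x)-x\widetilde W_{k-1}(x,q)(2x+q(1-2x))}.$$
The key point in assembling the cases is the ``junction'' letter: the last letter $a$ of $\pi'$, the letter $k$, and the first letter of $\omega$ could themselves create a non-symmetric peak. Writing $a<k$, the block $a\,k\,b$ is a peak for any $b<k$, and it is non-symmetric exactly when $b\neq a$, which is possible only when $k\geq 3$ (so that there is a letter $b\neq a$ strictly below $k$); this is precisely the source of the product $\prod_{j=3}^{k}((j-2)xq+1-(j-2)x)$, where at stage $j$ there are $j-2$ choices of $b\in[j-1]\setminus\{a\}$ giving a non-symmetric peak (weight $q$) versus the one choice $b=a$ or the empty/``equal start'' continuation (weight $1$). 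Concretely I would write the three cases as
$$x\,\NSP_{k-1}(x,q),\qquad x^2\,\NSP_{k-1}(x,q)\,\widetilde W_k(x,q),$$
$$x\,\NSP_{k-1}(x,q)\big((k-2)xq+1-(k-2)x\big)\big(\widetilde W_k(x,q)-x\widetilde W_k(x,q)-1\big)+(\text{boundary terms for }k\le 2),$$
collect them, and obtain a recursion of the form
$$\NSP_k(x,q)=x\,\NSP_{k-1}(x,q)\,\widetilde W_k(x,q)\big((k-2)xq+1-(k-2)x\big)$$
valid for $k\geq 3$, with $\NSP_0(x,q)=1$, $\NSP_1(x,q)=\frac{x}{1-x}$, and $\NSP_2(x,q)=x^2\widetilde W_1(x,q)\widetilde W_2(x,q)=\SP_2\text{-analogue}$ (no non-symmetric peak is possible over a two-letter alphabet, so the $q$-dependence only enters at $k\ge 3$). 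Iterating this recursion from the base cases yields
$$\NSP_k(x,q)=x^k\prod_{i=1}^{k}\widetilde W_i(x,q)\prod_{j=3}^{k}\big((j-2)xq+1-(j-2)x\big),$$
which is the claimed formula; a sanity check at $q=1$ gives $\NSP_k(x,1)=x^k\prod_{i=1}^k\frac{1}{1-ix}$, the generating function for $|P_{n,k}|$, since $\widetilde W_i(x,1)=\frac{1}{1-ix}$ and each junction factor becomes $1$.

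The main obstacle I anticipate is bookkeeping at the junction rather than anything deep: one must argue carefully that the decomposition $\pi=\pi'k\omega$ does not double-count or miss peaks straddling the two pieces, that the peak formed by (last letter of $\pi'$, $k$, first letter of $\omega$) is counted exactly once and with the correct symmetric/non-symmetric label, and that peaks lying entirely within $\pi'$ or entirely within the $k\omega$ part are correctly governed by $\NSP_{k-1}$ and $\widetilde W_k$ respectively. The subtle asymmetry with the symmetric case is that here the junction contributes a $k$-dependent polynomial factor $(k-2)xq+1-(k-2)x$ instead of the uniform $xq+1-x$, and verifying that this is the right count of ``which letter $b<k$ follows'' — weight $q$ for the $k-2$ non-equal choices, weight $1$ for the single equal choice and for the empty continuation — is the step where an off-by-one in the alphabet size (e.g., forgetting that $b=k$ is not a peak, or that $b$ ranges over $[k-1]$) would propagate into the final product. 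Once that is pinned down, the induction is routine.
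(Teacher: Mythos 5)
Your proposal follows essentially the same route as the paper: the decomposition $\pi=\pi'k\omega$, the case analysis on the first letter of $\omega$, Asakly's word generating function $\widetilde{W}_k$, the recursion $\NSP_k(x,q)=x\NSP_{k-1}(x,q)\widetilde{W}_k(x,q)\left((k-2)xq+1-(k-2)x\right)$ for $k\ge 3$ with $\NSP_2(x,q)=\frac{x^2}{(1-x)(1-2x)}$, and induction. One slip worth fixing: your displayed third case multiplies the junction polynomial by the whole of $\widetilde{W}_k-x\widetilde{W}_k-1$, and then your three terms do not actually sum to the stated recursion; the paper instead splits the nonempty continuations into the $k-2$ first letters $b\in[k-1]\setminus\{a\}$ that create a non-symmetric peak, contributing $x^2(k-2)q\NSP_{k-1}(x,q)\widetilde{W}_k(x,q)$, and the remaining continuations, contributing $x\NSP_{k-1}(x,q)\left(\widetilde{W}_k(x,q)-(k-2)x\widetilde{W}_k(x,q)-1\right)$, which does collect to the recursion. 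Since your prose account of the junction ($k-2$ first letters weighted $q$ versus the rest weighted $1$) is exactly this correct split, the argument as intended coincides with the paper's.
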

\begin{proof}
Let $\pi$ be any partition of $[n]$ with exactly $k$ blocks, where $k\geq 3$. It can be decomposed as
 $\pi=\pi'k\omega$, where $\pi'$ is a partition of $[n_1]$ with exactly $k-1$ blocks, and $\omega$ is a word of length $n_2$
 over alphabet $[k]$ satisfying $n_1+1+n_2=n$. There are three possibilities: $\omega$ is empty, $\omega$
 starts with a letter different from the last
 letter in $\pi'$, and $\omega$ starts with a letter equal to the last letter of $\pi'$.
  Let $$\widetilde{W}_k(x,q)=\frac{x(q-1)+(1-x(q-1))\widetilde{W}_{k-1}(x,q)}{1-x(1-q)(1-2x)-x\widetilde{W}_{k-1}(x,q)(2x+q(1-2x))},$$
   be the generating function for the number of words $\omega$ of length
  $n$ over alphabet $[k]$ according to the statistic non-symmetric peaks (see \cite{A}).
  Then the corresponding generating functions for the above cases respectively are,
   $$x\NSP_{k-1}(x,q),$$
   $$x^2(j-2)q\NSP_{k-1}(x,q)\widetilde{W}_k(x,q),$$
   $$x\NSP_{k-1}(x,q)(\widetilde{W}_k(x,q)-x(j-2)\widetilde{W}_k(x,q)-1).$$
   This leads to
  $$\NSP_k(x,q)=x\NSP_{k-1}(x,q)\widetilde{W}_k(x,q)(xq(j-2)+1-x(j-2)),$$
  with initial conditions,
  $\NSP_0(x,q)=1$, $\NSP_1(x,q)=\frac{x}{1-x}$ and $\NSP_2(x,q)=\frac{x^2}{(1-x)(1-2x)}$.
  By induction we obtain the required result.
\end{proof}

By substituting $q=1$ in Theorem \ref{Th2}, we obtain $\NSP_k(x,1)=x^k\prod_{j=1}^{k}W_j(x,1)=x^k\prod_{j=1}^{k}\frac{1}{1-jx}$ which is
the generating function for the number of partitions of $[n]$ with $k$ blocks.

 Our goal is to find the total number of the non-symmetric peaks over all partitions of $[n]$ with exactly $k$ blocks. To achieve this, we repeat the same steps as presented in Lemma {\ref{lem1}}.
\begin{lemma}\label{lem2}
The partial derivative $\frac{\partial }{\partial q}\NSP_k(x,q)\mid_{q=1}$ is given by
 \begin{align*}
 \frac{\partial }{\partial q}\NSP_k(x,q)\mid_{q=1}=\binom{k-1}{2}x\NSP_k(x,1)+\NSP_k(x,1)\sum_{m=3}^{k}\frac{2x^3\binom{m}{3}}{(1-mx)}.
 \end{align*}
\end{lemma}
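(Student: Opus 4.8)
The plan is to mimic exactly the computation carried out in the proof of Lemma \ref{lem1}, replacing the product structure of $\SP_k$ with that of $\NSP_k$ from Theorem \ref{Th2}. Write $\NSP_k(x,q)=x^k\,P(x,q)\,Q(x,q)$ where $P(x,q)=\prod_{i=1}^{k}\widetilde W_i(x,q)$ and $Q(x,q)=\prod_{j=3}^{k}\bigl((j-2)xq+1-(j-2)x\bigr)$. Differentiate with the product rule, $\frac{\partial}{\partial q}\NSP_k = x^k\bigl(P_q Q + P Q_q\bigr)$, and then evaluate at $q=1$. The point is that at $q=1$ each factor $(j-2)xq+1-(j-2)x$ equals $1$, so $Q(x,1)=1$, and similarly (by the $q=1$ specialization noted after Theorem \ref{Th2}) $\widetilde W_j(x,1)=\frac{1}{1-jx}$, so $x^kP(x,1)=\NSP_k(x,1)$.

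Next I would handle the two pieces separately. For the $Q_q$ term: $\frac{\partial}{\partial q}\log Q = \sum_{j=3}^{k}\frac{(j-2)x}{(j-2)xq+1-(j-2)x}$, which at $q=1$ becomes $\sum_{j=3}^{k}(j-2)x = x\sum_{m=1}^{k-2}m = x\binom{k-1}{2}$. Multiplying by $\NSP_k(x,1)$ gives the first summand $\binom{k-1}{2}x\,\NSP_k(x,1)$. For the $P_q$ term: $\frac{\partial}{\partial q}\log P = \sum_{m=1}^{k}\frac{\frac{\partial}{\partial q}\widetilde W_m(x,q)}{\widetilde W_m(x,q)}$, and here I would invoke Asakly's evaluation \cite{A} that $\frac{\partial}{\partial q}\widetilde W_m(x,q)\mid_{q=1}=\frac{2x^3\binom{m}{3}}{(1-mx)^2}$ (the non-symmetric analogue of the formula $\frac{\partial}{\partial q}W_k(x,q)\mid_{q=1}=\frac{x^3\binom{k}{2}}{(1-kx)^2}$ used in Lemma \ref{lem1}); dividing by $\widetilde W_m(x,1)=\frac{1}{1-mx}$ gives $\frac{2x^3\binom{m}{3}}{1-mx}$, and multiplying by $\NSP_k(x,1)$ and summing over $m$ yields the second summand. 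Note the terms with $m=1,2$ vanish since $\binom{m}{3}=0$, which is why the sum starts at $m=3$.

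Adding the two contributions gives
\[
\frac{\partial }{\partial q}\NSP_k(x,q)\mid_{q=1}=\binom{k-1}{2}x\,\NSP_k(x,1)+\NSP_k(x,1)\sum_{m=3}^{k}\frac{2x^3\binom{m}{3}}{1-mx},
\]
as claimed. The only genuine obstacle is that the derivation relies on the value of $\frac{\partial}{\partial q}\widetilde W_m(x,q)\mid_{q=1}$, which is not reproved here but quoted from \cite{A}; one could alternatively verify it directly from the recurrence for $\widetilde W_j$ by differentiating the defining relation in $q$, setting $q=1$, using $\widetilde W_{j-1}(x,1)=\frac{1}{1-(j-1)x}$, and solving the resulting linear relation for $\frac{\partial}{\partial q}\widetilde W_j(x,1)$ — a routine but slightly lengthy computation. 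Everything else is a mechanical application of the logarithmic-derivative trick already established in Lemma \ref{lem1}.
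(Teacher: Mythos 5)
Your proposal is correct and follows essentially the same route as the paper: product rule on the two factors of $\NSP_k$, evaluation at $q=1$ using $\widetilde W_j(x,1)=\frac{1}{1-jx}$ and the fact that each linear factor becomes $1$, the sum $\sum_{j=3}^k(j-2)x=x\binom{k-1}{2}$, and Asakly's formula $\frac{\partial}{\partial q}\widetilde W_m(x,q)\mid_{q=1}=\frac{2x^3\binom{m}{3}}{(1-mx)^2}$ quoted rather than rederived. The logarithmic-derivative phrasing is only a cosmetic repackaging of the paper's identical computation.
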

\begin{proof}
By differentiating the generating function $\NSP_k(x,q)$ with respect
 to the variable $q$ and then evaluating the result at $q=1$ we obtain,
 \begin{align*}
  \frac{\partial }{\partial q}\NSP_k(x,q)\mid_{q=1}
  &=\left(x^k\sum_{m=1}^{k}\frac{\partial}{\partial q}\widetilde{W}_m(x,q)\prod_{i=1, i\neq m}^{k}\widetilde{W}_i(x,q)\prod_{j=3}^{k}((j-2)xq+1-(j-2)x)\right)\mid_{q=1}\\
  &+\left(x^k\prod_{i=1}^{k}{\widetilde{W}_i(x,q)}\sum_{m=3}^{k}(m-2)x\prod_{j=3, j\neq m}^{k}((j-2)xq+1-(j-2)x)\right)\mid_{q=1}\\
  &=x^k\sum_{m=1}^{k}\frac{\partial}{\partial q}\widetilde{W}_m(x,q)\mid_{q=1}\prod_{i=1, i\neq m}^{k}\widetilde{W}_i(x,1)+x^k\prod_{i=1}^{k}{\widetilde{W}_i(x,1)}\sum_{m=3}^{k}(m-2)x\\
  &=x^k\sum_{m=1}^{k}\frac{\frac{\partial}{\partial q}\widetilde{W}_m(x,q)\mid_{q=1}}{\widetilde{W}_m(x,1)}\prod_{i=1}^{k}\widetilde{W}_i(x,1)+x\frac{(k-1)(k-2)}{2}x^k\prod_{i=1}^{k}{\widetilde{W}_i(x,1)}.
 \end{align*}
 According to Asakly \cite{A}, $\frac{\partial}{\partial q}\widetilde{W}_k(x,q)\mid_{q=1}=\frac{2x^3\binom{k}{3}}{(1-kx)^2}$ and the equalities $\widetilde{W}_k(x,q)\mid_{q=1}=\frac{1}{1-kx}$ and $\NSP_k(x,1)=x^k\prod_{i=1}^{k}{\widetilde{W}_i(x,1)}$ lead to the required result.
\end{proof}
\begin{corollary}\label{cor2}
The total number of non-symmetric peaks in all partitions of $[n]$ with exactly $k$ blocks is
\begin{equation*}
 \binom{k-1}{2}S_{n-1,k}+2\sum_{j=3}^{k}\binom{j}{3}\sum_{i=3}^{n-k}j^{i-3}S_{n-i,k}.
\end{equation*}
\end{corollary}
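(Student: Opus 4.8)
The plan is to follow verbatim the route used for Corollary~\ref{cor1}: invoke Lemma~\ref{lem2} and then read off the coefficient of $x^n$, using the expansion
\[
\NSP_k(x,1)=\frac{x^k}{\prod_{j=1}^{k}(1-jx)}=\sum_{r\geq k}S_{r,k}x^r .
\]
By Lemma~\ref{lem2} the series whose $x^n$-coefficient we want is
\[
\binom{k-1}{2}x\,\NSP_k(x,1)+\NSP_k(x,1)\sum_{m=3}^{k}\frac{2x^{3}\binom{m}{3}}{1-mx}.
\]
The first summand contributes $\binom{k-1}{2}[x^{n-1}]\NSP_k(x,1)=\binom{k-1}{2}S_{n-1,k}$. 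For the second, I would use the geometric expansion $\dfrac{x^{3}}{1-mx}=\sum_{i\geq 3}m^{\,i-3}x^{i}$, so that $\dfrac{2x^{3}\binom{m}{3}}{1-mx}=2\binom{m}{3}\sum_{i\geq 3}m^{\,i-3}x^{i}$; convolving this with $\sum_{r\geq k}S_{r,k}x^r$ and retaining only the terms with $n-i\geq k$ gives the contribution $2\binom{m}{3}\sum_{i=3}^{n-k}m^{\,i-3}S_{n-i,k}$. Summing over $m=3,\dots,k$ and renaming $m$ to $j$ produces the stated formula. This part is entirely routine bookkeeping, so I do not expect any real obstacle here.

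As in Section~2, I would also supply a combinatorial proof mirroring the one for Corollary~\ref{cor1}, splitting the non-symmetric peaks $\pi_\ell\pi_{\ell+1}\pi_{\ell+2}$ of a partition in $P_{n,k}$ according to whether $\pi_{\ell+1}$ is a record. If $\pi_{\ell+1}$ is a record, then $\pi_{\ell+2}<\pi_{\ell+1}$ forces $\pi_{\ell+2}$ to be a non-record, so deleting the $(\ell+2)$-th letter (and relabelling) yields a partition $\Pi'\in P_{n-1,k}$; conversely, from $\Pi'$ one picks a record $\pi'_{p}=m$ with $p\geq 2$ (there is one for each value $m\in\{2,\dots,k\}$) and reinserts, right after position $p$, a letter $b$ with $b<m$ and $b\neq\pi'_{p-1}$, which creates a non-symmetric peak; there are $m-2$ choices for $b$, and $\sum_{m=2}^{k}(m-2)=\binom{k-1}{2}$, giving $\binom{k-1}{2}S_{n-1,k}$. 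If $\pi_{\ell+1}$ is not a record, decompose $\pi=\pi''j\alpha\beta$ with $\pi''$ a partition with $j-1$ blocks, $\alpha$ a word of length $i$ over $[j]$ whose last three letters form a non-symmetric peak, and $\beta$ possibly empty: there are $j^{\,i-3}$ choices for the first $i-3$ letters of $\alpha$, exactly $\sum_{b=3}^{j}(b-1)(b-2)=2\binom{j}{3}$ choices for its last three letters, and $S_{n-i,k}$ ways to complete $\pi''j\beta$, with $3\le i\le n-k$ and $3\le j\le k$; this contributes $2\sum_{j=3}^{k}\binom{j}{3}\sum_{i=3}^{n-k}j^{\,i-3}S_{n-i,k}$.

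The only points that need genuine care are in the combinatorial argument: first, checking that the deletion/insertion in the record case really is a bijection onto the set of triples (a partition in $P_{n-1,k}$, a marked record, a marked letter $b$) — in particular that the reinserted letter $b<m$ is never a new record, so that the restricted-growth property of the canonical sequence is preserved — and second, evaluating the two hockey-stick-type sums $\sum_{m=2}^{k}(m-2)=\binom{k-1}{2}$ and $\sum_{b=3}^{j}(b-1)(b-2)=2\sum_{b=3}^{j}\binom{b-1}{2}=2\binom{j}{3}$, which are precisely what convert the local count of non-symmetric peaks into the closed form in the statement. Everything else runs parallel to the symmetric case already treated.
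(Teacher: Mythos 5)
Your proposal is correct and follows essentially the same two routes as the paper: extracting the coefficient of $x^n$ from Lemma~\ref{lem2} using $\NSP_k(x,1)=\sum_{r\geq k}S_{r,k}x^r$, and a combinatorial argument that splits non-symmetric peaks according to whether $\pi_{\ell+1}$ is a record, with the record case counted via $\sum_{m}(m-2)=\binom{k-1}{2}$ and the non-record case via the decomposition $\pi=\pi''j\alpha\beta$ contributing $2\binom{j}{3}j^{i-3}S_{n-i,k}$. Your phrasing of the record case as a deletion/insertion bijection (with the check that the restricted-growth property is preserved) is a slightly more careful rendering of the paper's insertion argument, but it is the same idea.
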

\begin{proof}
By finding the coefficients of $x$ in $\frac{\partial }{\partial q}\NSP_k(x,q)\mid_{q=1}$ we get the result.
\end{proof}
\subsection{Combinatorial proof}
In this subsection, we want to present a combinatorial proof for Corollary (\ref{cor2}).
\begin{proof}
Let us focus on non-symmetric peaks $\pi_\ell \pi_{\ell+1}\pi_{\ell+2}$ where $\pi_{\ell+1}$ is a record. Consider $\pi = \pi_1 \pi_2 \cdots \pi_{n-1}$ as a canonical sequential form for $\Pi \in P_{n-1,k}$. Choose any record $\pi_{\ell+1}$ in $\Pi \in P_{n-1,k}$, where $1\leq \ell\leq n-1$. It is obvious that we have a rise at $\ell$. Add $\ell+2$ to any block $a$, where $a<\pi_{\ell+1}$ and $a\neq \pi_\ell$, by increasing any member in their block that is greater or equal to $\ell+2$ by one. As a result, we obtain the canonical sequential form $\pi'=\pi_1\pi_2\cdots\pi_\ell\pi_{\ell+1}a\pi_{\ell+2}\cdots\pi_{n-1}$, where $\pi_{\ell+1}a\pi_{\ell+2}$ forms a non-symmetric peak. For each chosen record $\pi_{\ell+1}$, there are $\pi_{\ell+1}-2$ possible choices for the block $a$. The sum of these choices, considering all optional records from $3$ to $k$ (excluding $1$ and $2$ since they do not form non-symmetric peaks), gives $\frac{\left(k-1\right)\left(k-2\right)}{2}$ possibilities. As a result, this leads to $\binom{k-1}{2}S_{n-1,k}$ non-symmetric peaks.

Let us focus on non-symmetric peaks $\pi_\ell \pi_{\ell+1}\pi_{\ell+2}$, where $\pi_{\ell+1}$ is not  a record. Considering $i$ and $j$, where $3 \le i \le n-k$ and $2 \le j \le k$, the total number of non-symmetric peaks at $\ell$ where $\pi_{\ell+1}$ is not a record can be determined by summing the optional partitions in $P_{n,k}$ that can be decomposed as $\pi=\pi''j\alpha\beta$ for each $i$ and $j$. Here, $\pi''$ is a partition with $j-1$ blocks, $\alpha$ is a word of length $i$ over alphabet $[j]$ whose last three letters forming a non-symmetric peak, and $\beta$ may be an empty word. There are $2{j \choose 3}j^{i-3}S_{n-i,k}$ members of $P_{n,k}$ with this form. This is because there are $j^{i-3}$ choices for the first $i-3$ letters of $\alpha$, $2{j \choose 3}$ for the final three letters in $\alpha$ (representing a non-symmetric peak), and $S_{n-i,k}$ choices for the remaining letters $\pi=\pi''j\beta$, ensuring that they form a partition of an $(n-i)$-set into $k$ blocks.
\end{proof}

\end{document}